\documentclass[12pt]{article}
\usepackage{amsfonts}
\newtheorem{theorem}{Theorem}[section]
\newtheorem{prop}[theorem]{Proposition}
\newtheorem{unnumber}{}

\newtheorem{prepf}[unnumber]{Proof}
\newenvironment{proof}{\prepf\rm}{\endprepf}

\begin{document}
\title{Filters, topologies, the Rado graph, and the Urysohn space}
\author{Peter J. Cameron\\\small{University of St Andrews}}
\date{To the memory of Anatoly Vershik}
\maketitle

\begin{abstract}
My work with Anatoly Vershik concerned automorphism groups of the Rado graph
and homeomorphism groups of the Urysohn space. This paper contains some further
thoughts on these issues, together with connections to topologies and filters
on countable sets.

\medskip

Keywords: primitive permutation groups, filters, topologies, Mekler's
condition, Rado graph, countable Urysohn space

MSC: 05C63, 20B27, 54A10

\end{abstract}

\section{Introduction}

I first met Anatoly Vershik at the European Congress of Mathematics in
Barcelona in 2000. I had just given a lecture on one of my favourite topics,
the Rado graph (also known as the Erd\H{o}s--R\'enyi random graph), when
Anatoly introduced himself to me and proceeded to give me a tutorial about
what became another of my favourite topics, the Urysohn metric space. We
wrote a paper~\cite{cv} about isometry groups of this space; the current
paper, among other things, throws a little more light on this topic.

In this paper I will outline a proof of a theorem about countable
topological spaces with primitive group of homeomorphisms. This leads to
a connection between the ``rational Urysohn space'' and the
``rational world'' (the latter being the term used by Peter Neumann to
describe $\mathbb{Q}$ as topological space). Filters arise in these topics,
and I show that the neighbourhood filter in the Rado graph is the universal
countable neighbourhood filter.
The paper also contains some remarks on topologies as relational structures.

I would like to thank Sam Tarzi for many discussions on some of the results
presented here.

\section{Imprimitive groups and topologies}

A permutation group $G$ on a set $\Omega$ is \emph{imprimitive} if it 
preserves a non-trivial equivalence relation on $\Omega$. (The trivial
equivalence relations are the relation of equality and the universal relation
with a single equivalence class. More generally, I will say that a relation
is trivial if it is invariant under the symmetric group.) The group $G$ is
\emph{primitive} otherwise.

A finite primitive permutation group has the following separation property:
if $G$ is primitive on $\Omega$, and $\Delta$ is a non-empty proper subset of
$\Omega$, then for distinct points $x,y\in\Omega$ there exists
$g\in G$ such that $xg\in\Delta$ and $yg\notin\Delta$. This is at
the base of many properties of primitive groups. But, in a talk in Oberwolfach
in the 1980s, Helmut Wielandt remarked that it fails for infinite primitive
groups. (For example, take $\Omega=\mathbb{Q}$, $G$ the group of
order-preserving permutations of $\mathbb{Q}$, and $\Delta$ the set of
positive rationals.) He proposed the following stronger notion.

A \emph{preorder} is a reflexive and transitive relation. Say that the
permutation group $G$ on $\Omega$ is \emph{strongly primitive} if it preserves
no non-trivial preorder on $\Omega$. Wielandt showed that some good properties
of finite primitive groups extend to strongly primitive groups but not to
primitive groups.

In fact there is a connection between preorders and topologies. Given a
topology $\mathcal{T}$ on $\Omega$, define a preorder $\to$ by the rule
that $x\to y$ if and only if every open set containing $x$ also contains $y$.
In the other direction, if $\to$ is a preorder, define a topology by the
rule that the sets $U_x=\{y:x\to y\}$ form a base for the topology.

Now the map preorder~$\to$~topology~$\to$~preorder is always the identity,
but the map topology~$\to$~preorder~$\to$~topology may not be, though it
always produces a stronger topology. We say that a topology is \emph{relational}
if it is fixed by this map.

Then, for example, every finite topology is relational.

Primitivity and Wielandt's strengthening of it are connected to separation
properties of $G$-invariant topologies.

\begin{prop}
Let $G$ be a transitive permutation group on $\Omega$. Then $G$ is primitive
if and only if every non-trivial $G$-invariant topology is T0, and $G$ is
strongly primitive if and only if every non-trivial $G$-invariant topology
is T1.
\end{prop}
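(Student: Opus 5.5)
The approach is to translate everything through the specialisation preorder $x\to y$ (every open set containing $x$ contains $y$) attached to a topology $\mathcal{T}$. Since this preorder is defined from $\mathcal{T}$ alone, any $G$-invariant topology yields a $G$-invariant preorder. Two facts about it do most of the work. First, $\mathcal{T}$ is T0 if and only if the equivalence relation $x\equiv y \iff (x\to y \hbox{ and } y\to x)$ is equality. Second, $\mathcal{T}$ is T1 if and only if $\to$ is equality. Throughout I read ``non-trivial topology'' as ``not the indiscrete topology'', the topology whose only open sets are $\emptyset$ and $\Omega$. This matches the paper's convention that trivial objects are the $\mathrm{Sym}(\Omega)$-invariant ones: the only $\mathrm{Sym}(\Omega)$-invariant topologies are the indiscrete one and the discrete one, and the discrete one is T1, so excluding it changes nothing.

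\emph{Primitivity.} Suppose first that $G$ is primitive and $\mathcal{T}$ is $G$-invariant. Then $\equiv$ is a $G$-invariant equivalence relation, so it is either equality or universal. If it is equality, $\mathcal{T}$ is T0. If it is universal, every nonempty open set $U$ contains, together with any of its points $x$, every $y$ with $x\to y$, which is all of $\Omega$. Hence $\mathcal{T}$ is indiscrete, i.e.\ trivial.

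Conversely, suppose $G$ preserves a non-trivial equivalence relation $E$. Take as open sets the unions of $E$-classes. This topology is $G$-invariant. It is not indiscrete, because some $E$-class is a proper nonempty subset of $\Omega$. It is not T0, because two distinct $E$-equivalent points lie in exactly the same open sets.

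\emph{Strong primitivity.} Suppose $G$ is strongly primitive and $\mathcal{T}$ is $G$-invariant. Then $\to$ is a $G$-invariant preorder, so it is trivial. The only $\mathrm{Sym}(\Omega)$-invariant preorders are equality and the universal relation, since any invariant relation containing one pair of distinct points contains all such pairs. If $\to$ is equality, $\mathcal{T}$ is T1. If $\to$ is universal, the argument above shows $\mathcal{T}$ is indiscrete.

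Conversely, let $\to$ be a non-trivial $G$-invariant preorder, and take the topology whose base is the sets $U_x=\{y:x\to y\}$. This is a topology because the $U_x$ cover $\Omega$ and, by transitivity, $z\in U_x\cap U_y$ implies $U_z\subseteq U_x\cap U_y$. It is $G$-invariant because $U_x g=U_{xg}$. By the remark in the text, its specialisation preorder is $\to$ again. Since $\to$ is not equality, the topology is not T1. Since $\to$ is not universal, some $U_x$ is a proper subset of $\Omega$, so the topology is not indiscrete.

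The main obstacle is pinning down the meaning of ``non-trivial'' so that both directions work. Transitivity of $G$ is used only mildly. It is not needed for the implications above. It matters only in checking that the statement is not vacuous: for example, a proper subset is not automatically excluded by invariance without it. I would state the convention explicitly at the start of the proof.
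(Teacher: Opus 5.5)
Your proof is correct and follows the route the paper indicates: its remark that a non-T0 or non-T1 invariant topology can be replaced by a relational one amounts to passing through the specialisation preorder and the topology with base the sets $U_x$, exactly as you do. One slip needs fixing: for infinite $\Omega$ the indiscrete and discrete topologies are not the only $\mathrm{Sym}(\Omega)$-invariant ones (the cofinite topology is another, and on uncountable sets so is the co-countable one), but your conclusion that the convention is immaterial still holds, since your own argument applied to $G=\mathrm{Sym}(\Omega)$ shows every $\mathrm{Sym}(\Omega)$-invariant topology is either indiscrete or T1.
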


Note that $G$ preserves a non-T0 (resp.~non-T1) topology if and only if it
preserves a relational topology with the same property.

Note also that the classes of finite equivalence relations and finite preorders
are \emph{Fra\"{\i}ss\'e classes}, so there exist countable universal 
homogeneous relational topologies of each of these types. The countable
homogeneous equivalence relation simply corresponds to a partition with
infinitely many infinite parts; the preorder is obtained from the countable
homogeneous partial order by inflating each point to an infinite set.

\section{Topologies and filters}

The celebrated O'Nan--Scott Theorem on finite primitive permutation groups
was originally regarded by its authors as a description of the maximal
subgroups of finite symmetric groups. Note that every proper subgroup of such
a group is contained in a maximal subgroup.

The situation is different for infinite symmetric groups: for example, a 
countable subgroup of the symmetric group of countable degree is always
contained in a larger countable subgroup, and so cannot be maximal. It is not
known whether any proper subgroup is contained in a maximal subgroup.

In 1990, Dugald Macpherson and Cheryl Praeger published a paper~\cite{mp}
proving that a subgroup of the symmetric group of countable degree which is
not highly transitive is contained in a maximal subgroup. (A permutation group
is \emph{highly transitive} if it acts transitively on the set of $n$-tuples
of distinct points of its domain for every natural number $n$.) Their proof
makes extensive use of tools from model theory including the notions of 
stability and Morley rank and theorems of Ehrenfeucht and Mostowski,
Ryll-Nardzewski, and Cherlin, Harrington and Lachlan. 

A \emph{filter} on $\Omega$ is a collection of subsets of $\Omega$ which is
closed upwards and closed under finite intersections. It is non-trivial
if it does not contain the empty set. (This is slightly at variance with the
earlier interpretation of non-triviality, in that the cofinite subsets of an
infinite set form a filter invariant under the symmetric group.) The argument
leans on the fact that the automorphism group of an \emph{ultrafilter} (a
maximal non-trivial filter) is a maximal subgroup of the symmetric group. Note
however that not every proper subgroup is contained in the stabiliser of an
ultrafilter: the referee points out that if a regular subgroup $G$ (acting on
itself by right multiplication) fixed an ultrafilter, there would be an
invariant multiplicative norm on $G$.

An important step in the Macpherson--Praeger proof was the following theorem.

\begin{theorem}
Let $G$ be a subgroup of the symmetric group on a countable set $\Omega$. Then
$G$ preserves a non-trivial topology if and only if $G$ preserves a
non-trivial filter.
\end{theorem}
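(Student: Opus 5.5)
I read ``non-trivial'' as the paper does: not invariant under the full symmetric group. On a countable infinite $\Omega$ the symmetric-group-invariant topologies are the indiscrete, discrete and cofinite topologies, and the invariant filters are $\{\Omega\}$, the cofinite filter and the improper filter. The plan is to prove the two directions separately. The easy direction uses an explicit construction. The hard direction builds an invariant filter out of the dense open sets and treats the degenerate case separately. I have not settled one sub-case of that degenerate case, and say below where.

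\emph{Filter $\Rightarrow$ topology.} Let $\mathcal{F}$ be a non-trivial $G$-invariant filter and put $\mathcal{T}=\mathcal{F}\cup\{\emptyset\}$. Upward closure gives closure under arbitrary unions, and the filter axioms give closure under finite intersections, so $\mathcal{T}$ is a topology. It is $G$-invariant because $\mathcal{F}$ is. It is not discrete, since $\emptyset\notin\mathcal{F}$ forces every open set other than $\emptyset$ to lie in $\mathcal{F}$, so no singleton complement is both open and closed in a way that gives all subsets. It is not indiscrete, since otherwise $\mathcal{F}=\{\Omega\}$. It is not cofinite, since otherwise $\mathcal{F}$ would be the cofinite filter. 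This direction is routine.

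\emph{Topology $\Rightarrow$ filter.} First, if $G$ is intransitive, take a $G$-orbit $O\ne\Omega$. The principal filter $\{X: X\supseteq O\}$ is $G$-invariant and non-trivial, so from now on I assume $G$ is transitive. Given a non-trivial $G$-invariant topology $\mathcal{T}$, let $\mathcal{D}$ be the collection of sets containing a dense open set. The intersection of two dense open sets is dense and open, so $\mathcal{D}$ is a filter. It is proper, and $G$-invariant because $G$ acts by homeomorphisms. It remains to show that $\mathcal{D}$ is non-trivial, or else to handle the cases where it is not.
\begin{itemize}
\item If $\mathcal{D}=\{\Omega\}$, then for every open $U$ the set $U\cup(\Omega\setminus\overline{U})$ is dense open and hence equals $\Omega$. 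So every open set is clopen. Arbitrary unions of open sets are then closed, so arbitrary intersections of open sets are open, and $\mathcal{T}$ is the topology of a $G$-invariant partition into atoms. Non-triviality of $\mathcal{T}$ means this partition is a non-trivial block system for $G$.
\item If $\mathcal{D}$ is the cofinite filter, then every dense open set is cofinite and every cofinite set is open. Here I would show that $\mathcal{T}$ coincides with the cofinite topology, except for possibly some nonempty open non-dense sets. A short argument using transitivity should either exhibit a non-dense open set whose $G$-translates generate a non-trivial filter, or reduce to the partition case.
\end{itemize}

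\emph{Main obstacle: the block-system case.} Let the blocks all have the same size (by transitivity). If there are finitely many blocks, or the blocks are infinite, then the filter of sets containing all but finitely many entire blocks is non-trivial and $G$-invariant. The difficulty is the case of infinitely many finite blocks, for instance $G=S_2\wr S_\infty$, where this filter collapses to the cofinite filter. I would first look for an invariant filter built from sets meeting each block in a specified way. I have not found one, and I suspect none exists for $S_2\wr S_\infty$. If that is right, the argument must use the precise notion of non-triviality in Macpherson--Praeger: either partition topologies with finite blocks are excluded from ``non-trivial'', or such groups are treated separately. This is where I expect the real work of the proof to lie.
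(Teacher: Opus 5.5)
\textbf{Where you are right.} Your easy direction and your intransitive case are fine. The clean reason $\mathcal{F}\cup\{\emptyset\}$ is not discrete is that two disjoint non-empty open sets would both lie in $\mathcal{F}$, forcing $\emptyset\in\mathcal{F}$.

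Your suspicion about $S_2\wr S_\infty$ is also correct. Take a member $X$ of an invariant proper filter and intersect it with its image under the swaps inside the blocks that $X$ meets in one point. What remains is the union of the blocks contained in $X$. This union must be cofinite, since otherwise a permutation of the blocks moves it off itself.

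\textbf{The block case is worse than you think, and the theorem needs primitivity.} Your claim about finitely many blocks is wrong: with finitely many blocks, ``all but finitely many entire blocks'' admits $\emptyset$. In fact $S_\infty\wr S_2$ on $B_1\cup B_2$ preserves the non-trivial topology $\{\emptyset,B_1,B_2,\Omega\}$ but no non-trivial filter. Indeed, if $B_1\setminus X$ is infinite, an element of $\mathrm{Sym}(B_1)$ moving $X\cap B_1$ into $B_1\setminus X$ puts $X\cap B_2$ in the filter, and its image under a block swap is disjoint from it. So the statement cannot be proved for arbitrary $G$. The paper's argument is explicitly for primitive $G$ (``in a non-trivial topology invariant under the primitive group $G$''), as in Macpherson--Praeger. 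Under that hypothesis your case $\mathcal{D}=\{\Omega\}$ is immediately contradictory, so your ``main obstacle'' disappears.

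\textbf{The genuine gap.} It is the case you waved through: $\mathcal{D}$ equal to the cofinite filter. For primitive $G$, this is where all the content lies. Transitivity alone cannot settle it. Give each of two infinite blocks the cofinite topology: $S_\infty\wr S_2$ is transitive, the topology is non-trivial, its dense open sets are exactly the cofinite sets, and there is no non-trivial invariant filter. (Also, $\mathcal{D}$ being the cofinite filter only says each cofinite set \emph{contains} a dense open set, not that it is open.)

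\textbf{What is missing.} You need the paper's second filter, the complements of finite unions of discrete sets, together with the dichotomy that decides which filter works. Join $x$ and $y$ when they have disjoint open neighbourhoods.
\begin{itemize}
\item If this graph has no infinite clique, it is complete multipartite with the parts as blocks, so primitivity leaves it with no edges. Then any two non-empty open sets meet, and the non-empty open sets generate a non-trivial filter (your $\mathcal{D}$).
\item If it has an infinite clique, that clique is a Hausdorff subspace, hence contains an infinite discrete subset (Sierpi\'nski). Primitivity excludes finite non-empty open sets (a minimal one would be a block). It also excludes $\Omega$ being a finite union of discrete sets. So the complements of finite unions of discrete sets form a non-trivial invariant filter.
\end{itemize}
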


I was able to simplify this theorem; I sketch the argument here. See the
paper~\cite{topsym} for further details.

Just two constructions of filters from topologies are used in the proof:
\begin{itemize}
\item the sets containing finite intersections of dense open sets;
\item the complements of finite unions of discrete sets.
\end{itemize}
The proof involves showing that, in a non-trivial topology invariant under the
primitive group $G$, one or other of these must be a non-trivial filter.

Let $(\Omega,\mathcal{T})$ be a  topology preserved by a primitive group.
If it contains a non-empty finite open set, then it is discrete (since a
minimal open set is a block of imprimitivity). So we can suppose that this is
not the case. Also, if $\Omega$ is a union of finitely many discrete subsets,
then again it is discrete.

Define a graph $\Gamma$ on $\Omega$ by joining $x$ to $y$ if there exist
disjoint open sets containing $x$ and $y$. If $\Gamma$ contains no infinite
clique, then every finite clique is contained in a finite maximal clique,
and it can be shown that $\Gamma$ is complete multipartite, with parts of size
$m$, for some $m$; since it has a primitive automorphism group, we must have
$m=1$, so that any two non-empty open sets intersect. Then the collection of
sets containing a non-empty open set is a non-trivial filter.

On the other hand, if $\Gamma$ contains an infinite clique, then the induced
topology on it is Hausdorff, so $\Gamma$ contains an infinite discrete subset
(an exercise in Sierpi\'nski's book~\cite{sierpinski}), so the collection of
complements of discrete sets is a non-trivial filter.

\section{Mekler's theorem}

A simple property of topologies and filters with primitive automorphism groups
is given by the following result. A \emph{moiety} in a countable set is an
infinite and co-infinite subset.

\begin{prop}
Let $G$ be a primitive permutation group on the countable set $\Omega$.
\begin{enumerate}
\item $G$ preserves a non-trivial topology if and only if there is a moiety
$\Delta$ of $\Omega$ such that, for any $g_1,\ldots,g_n\in G$, the set
$\Delta g_1\cap\cdots\cap\Delta g_n$ is empty or infinite.
\item $G$ preserves a non-trivial filter if and only if there is a moiety
$\Delta$ of $\Omega$ such that, for any $g_1,\ldots,g_n\in G$, the set
$\Delta g_1\cap\cdots\cap\Delta g_n$ is infinite.
\end{enumerate}
\label{p:filtop}
\end{prop}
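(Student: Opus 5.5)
The plan is to prove each part by a direct construction in each direction, using primitivity only to rule out degenerate cases (finite nonempty open sets, and filters containing finite sets). Throughout, ``non-trivial topology'' means neither discrete nor indiscrete, and ``non-trivial filter'' means a filter not containing $\emptyset$ and not equal to the cofinite filter (nor the improper filter $\{\Omega\}$ alone).

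\emph{Part 2, forward direction.} Let $\mathcal{F}$ be a non-trivial $G$-invariant filter. First I show that $\mathcal{F}$ contains no finite set. Suppose some finite $A\in\mathcal{F}$, and choose one of minimal size. Translates of $A$ lie in $\mathcal{F}$, so their intersections do too. By minimality, any two translates either coincide or are disjoint; the latter is impossible, since $\emptyset\notin\mathcal{F}$. Hence $A$ is a block of imprimitivity. By primitivity it is then a singleton $\{x\}$, and by transitivity every singleton lies in $\mathcal{F}$, which forces $\emptyset\in\mathcal{F}$, a contradiction.

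Next, since $\mathcal{F}$ is not contained in the cofinite filter, $\mathcal{F}$ contains a co-infinite set $\Delta$. This $\Delta$ is infinite, so it is a moiety, and every finite intersection of its translates lies in $\mathcal{F}$ and is therefore infinite.

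\emph{Part 2, converse.} Given such a moiety $\Delta$, take $\mathcal{F}$ to be the family of supersets of finite intersections $\Delta g_1\cap\cdots\cap\Delta g_n$. This is a $G$-invariant filter, it omits $\emptyset$, and it contains the co-infinite set $\Delta$, so it is non-trivial.

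\emph{Part 1, converse.} Given $\Delta$, take the topology whose subbase is the translates $\Delta g$. The basic open sets are the finite intersections, which are empty or infinite. The topology is not indiscrete because $\Delta$ is a proper nonempty open set. It is not discrete because the nonempty basic open sets are infinite, so no singleton is open.

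\emph{Part 1, forward direction, and the main obstacle.} Let $\mathcal{T}$ be a non-trivial $G$-invariant topology. As in the preceding section, there is no finite nonempty open set: a minimal one would be a block, hence a singleton, and transitivity would make $\mathcal{T}$ discrete. So every nonempty open set is infinite, and finite intersections of open sets are open and hence empty or infinite. It remains to find an open moiety. Take a proper nonempty open set $U$, which is infinite. If $U$ is co-infinite, set $\Delta=U$.

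The hard case is when every proper open set is cofinite. In that case I pass to the filter instead. If all nonempty open sets are cofinite, any two meet, so the supersets of nonempty open sets form a $G$-invariant filter not containing $\emptyset$. This filter is generated by cofinite sets, so it lies inside the cofinite filter. I therefore expect to have to prove directly that a non-trivial $G$-invariant topology with all nonempty open sets cofinite cannot exist.

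The first thing I will try: the complement $F=\Omega\setminus U$ is a finite nonempty closed set. Intersecting translates of $U$, or equivalently taking finite unions of translates of $F$, shows that every finite union of translates of $F$ is closed. If $|F|=1$, then every finite set is closed, so $\mathcal{T}$ contains the cofinite topology. I then need an extra argument to produce either a moiety or a contradiction. Otherwise, take a minimal nonempty finite closed set and apply the block argument: intersections of translates of closed sets are closed, so a minimal finite closed set is a block and hence a singleton, reducing to the previous case. So everything comes down to the case where $\mathcal{T}$ is exactly the cofinite topology, or a topology whose open sets are all cofinite. I expect this case to be the genuine obstacle. It may require relaxing ``non-trivial'' to exclude the cofinite topology, mirroring the filter convention.
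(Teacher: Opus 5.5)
Part (b) and the converse of (a) are fine and match the paper's constructions. The problem is the forward direction of (a). You leave it unfinished, and the obstruction is definitional, not technical.

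Under the convention you fix at the outset (``non-trivial topology'' means neither discrete nor indiscrete), part (a) is false. So no extra argument can produce a moiety or a contradiction in your last case. For instance, $\mathrm{Sym}(\Omega)$ is primitive and preserves the cofinite topology. Yet for any moiety $\Delta$ and any $x\in\Delta$, the set $\{x\}\cup(\Omega\setminus\Delta)$ is again a moiety, hence equal to $\Delta g$ for some $g$, and $\Delta\cap\Delta g=\{x\}$.

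The statement needs the convention of Section~2: a structure is trivial if it is invariant under the symmetric group. The paper's proof uses this implicitly when it takes ``a non-cofinite open set (this exists since the topology is non-trivial)'', and likewise ``any non-cofinite set in the filter''. On a countable set, the $\mathrm{Sym}(\Omega)$-invariant topologies are exactly the indiscrete, cofinite and discrete ones. So the cofinite topology is trivial, just as the cofinite filter is in your treatment of (b). Your final sentence guesses this, but you need to adopt it.

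With that convention, your own reduction closes the gap, and you should say so explicitly. In the hard case you showed that a minimal nonempty finite closed set is a block, hence a singleton. So every finite set is closed and $\mathcal{T}$ contains the cofinite topology. Since every nonempty open set is cofinite, $\mathcal{T}$ \emph{equals} the cofinite topology; your alternative ``a topology whose open sets are all cofinite'' is not a separate case. Hence a non-trivial $\mathcal{T}$ has a non-cofinite open set, which is the required moiety.

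Completed this way, your argument follows the paper's route but is more careful. The paper simply asserts that a non-cofinite open set exists, and your minimal-closed-set argument is what justifies it. Primitivity really is needed there: a group preserving a partition into pairs preserves the topology whose proper closed sets are the finite unions of parts. That topology is not $\mathrm{Sym}(\Omega)$-invariant, but all its nonempty open sets are cofinite.

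In (b), your minimal-size argument replaces the appeal to Neumann's separation lemma. It in fact needs only transitivity, since minimality forces $Ag=A$ for all $g$. One line is missing there, though. To get from ``$\mathcal{F}$ is not $\{\Omega\}$ or the cofinite filter'' to ``$\mathcal{F}$ is not contained in the cofinite filter'', note the following. If $\Omega\setminus F\in\mathcal{F}$ with $F$ finite and nonempty, then $\Omega\setminus\{x\}\in\mathcal{F}$ for $x\in F$. Transitivity then puts every cofinite set in $\mathcal{F}$.
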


\begin{proof}
(a) Every non-empty open set is infinite; take $\Delta$ to be a non-cofinite
open set (this exists since the topology is non-trivial). Conversely, if 
$\Delta$ is such a set, then the class of all non-empty intersections of
finitely many translates of $\Delta$ is a basis for a non-trivial topology.

\medskip

(b) By Neumann's separation lemma~\cite[Lemma 2.3]{neumann}, no filter
invariant under an infinite transitive group can contain a finite set; so take
$\Delta$ to be any non-cofinite set in the filter. Conversely, if $\Delta$ has
the property in the Proposition, the collection of sets which contain a finite
intersection of translates of $\Delta$ is a non-trivial filter.
\end{proof}

The first part of this proposition is reminiscent of a celebrated theorem
of Mekler~\cite{mekler} (a simplified proof was given by Truss~\cite{truss}).
Using the terminology of Neumann~\cite{rational}, we use the term
\emph{rational world} to mean a countable topological space homeomorphic
to~$\mathbb{Q}$.

\begin{theorem}
Let $G$ be a countable permutation group on a countable set $\Omega$. Then
$G$ is a subgroup of the homeomorphism group of the rational world if and 
only if it satisfies the conditions of Proposition~\ref{p:filtop}(a).
\end{theorem}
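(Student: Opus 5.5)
The forward direction is direct. Suppose $G$ acts by homeomorphisms on a space $\Omega\cong\mathbb{Q}$. Take $\Delta$ to correspond to $\{q\in\mathbb{Q}:q>\sqrt2\}$. This set is open and is a moiety. Every finite intersection $\Delta g_1\cap\cdots\cap\Delta g_n$ is open, since each $g_i$ is a homeomorphism. A non-empty open subset of $\mathbb{Q}$ is infinite, so every such intersection is empty or infinite. Countability of $G$ is not needed here.

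For the converse I would produce a topology on $\Omega$ that is preserved by $G$ and satisfies the hypotheses of Sierpi\'nski's characterisation: a countable metrizable space without isolated points is homeomorphic to $\mathbb{Q}$. Since $\Omega$ is countable, it suffices to find a countable $G$-invariant Boolean algebra $\mathcal{A}$ of subsets of $\Omega$ such that:
\begin{itemize}
\item every non-empty member of $\mathcal{A}$ is infinite;
\item $\mathcal{A}$ separates points.
\end{itemize}
Taking $\mathcal{A}$ as a base of clopen sets then gives a countable, second countable, zero-dimensional, Hausdorff topology. Such a space is regular, hence metrizable by Urysohn's metrization theorem. It has no isolated points because basic open sets are infinite. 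It is $G$-invariant because $\mathcal{A}$ is.

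The first attempt is the algebra generated by the translates $\Delta g$, $g\in G$, which is countable because $G$ is. However, the hypothesis only controls intersections of translates, not Boolean combinations involving complements, and it gives no point separation. So I would build $\mathcal{A}$ as an increasing union $\mathcal{A}_0\subseteq\mathcal{A}_1\subseteq\cdots$. Here $\mathcal{A}_{n+1}$ is generated by $\mathcal{A}_n$ and the $G$-translates of one new set $\Delta_{n+1}$. The set $\Delta_{n+1}$ is chosen to separate the $n$th pair in an enumeration of pairs of distinct points, and so that all non-empty members of $\mathcal{A}_{n+1}$ stay infinite.

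I would construct $\Delta_{n+1}$ generically, as a limit of finite partial decisions "$z\in\Delta_{n+1}$" or "$z\notin\Delta_{n+1}$". This meets countably many requirements, one for each of the following data:
\begin{itemize}
\item a non-empty atom $A\in\mathcal{A}_n$;
\item elements $g_1,\ldots,g_k\in G$ and signs $\epsilon_1,\ldots,\epsilon_k$;
\item a finite set $F\subseteq\Omega$.
\end{itemize}
The requirement is: if the combination $A\cap\bigcap_i(\Delta_{n+1}g_i)^{\epsilon_i}$ is consistent, then it contains a point outside $F$. To meet it from a finite partial decision, I pick a point $z\in A\setminus F$ whose preimages $zg_i^{-1}$ are all undecided, and decide each $zg_i^{-1}$ according to $\epsilon_i$. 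This is possible because $A$ is infinite and only finitely many points are excluded.

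I expect the main obstacle to be coincidences $zg_i^{-1}=zg_j^{-1}$, that is, $z$ fixed by $g_jg_i^{-1}$. These can force contradictory decisions on every point of $A$. To handle this, I would put the fixed-point sets $\mathrm{Fix}(h)$, $h\in G$, into $\mathcal{A}_0$ from the start; this is harmless because $\mathrm{Fix}(h)g=\mathrm{Fix}(g^{-1}hg)$, so the family is $G$-invariant and countable. Then each atom $A$ either lies inside $\mathrm{Fix}(g_jg_i^{-1})$ or is disjoint from it. In the first case the sign pattern is either inconsistent, so the requirement is vacuous, or consistent with identified coordinates. In the second case the points $zg_i^{-1}$ can be taken distinct. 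The delicate step is then to show that in $\mathcal{A}_0$, generated by the translates of $\Delta$ and the sets $\mathrm{Fix}(h)$, every non-empty atom is infinite. This is exactly where the hypothesis on $\Delta$ must be used, and where finite fixed-point sets of elements of $G$ need separate treatment. If this fails as stated, I would first try replacing $\Delta$ by a suitable finite intersection of its translates, or shrinking $\Omega$'s point-separation requirements to exclude finitely many bad configurations.
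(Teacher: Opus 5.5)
Your forward direction is correct. Your reduction of the converse is also sound: it suffices to find a countable $G$-invariant Boolean algebra that separates points and whose non-empty members are all infinite. The gap is the step you flag as delicate, and it cannot be filled, because the converse is false under the hypothesis of Proposition~\ref{p:filtop}(a).

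Two counterexamples:
\begin{itemize}
\item Take $G=\langle(a\,b)\rangle$ on a countable set, with any moiety $\Delta$ avoiding $a$ and $b$. Then $\Delta(a\,b)=\Delta$, so the hypothesis holds trivially.
\item If you want primitivity, take the countable group of finitary even permutations of $\Omega$, with any moiety $\Delta$. Here $\Delta g_1\cap\cdots\cap\Delta g_n\supseteq\Delta\setminus\bigcup_i\mathrm{supp}(g_i)$, which is always infinite.
\end{itemize}
In each case some $h\ne 1$ has finite support. If $G$ acts by homeomorphisms on a Hausdorff space, then $\mathrm{Fix}(h)$ is closed, so $\mathrm{supp}(h)$ is a finite non-empty open set. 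A rational world has no such set.

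Inside your construction the obstruction appears at once. First, $\mathrm{supp}(h)=\Omega\setminus\mathrm{Fix}(h)$ is a finite non-empty member of $\mathcal{A}_0$. Second, even if you omit fixed-point sets, take any $D\in\mathcal{A}$ that separates some $x\in\mathrm{supp}(h)$ from $xh$. Then $D\triangle Dh$ is a non-empty member of $\mathcal{A}$ contained in $\mathrm{supp}(h)$, hence finite. So your two requirements on $\mathcal{A}$ are incompatible. Neither replacing $\Delta$ by an intersection of translates nor weakening the separation requirement can help, since the required conclusion itself forces every support to be open.

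The paper gives no argument here beyond attributing the result to Mekler (with Truss's simplified proof). As far as I know, the hypothesis Mekler's theorem uses is different: for all $g_1,\ldots,g_n\in G$, the set $\mathrm{supp}(g_1)\cap\cdots\cap\mathrm{supp}(g_n)$ is empty or infinite. This is necessary, by the same closed-fixed-set argument as above. Both examples above violate it. It implies the condition of Proposition~\ref{p:filtop}(a), via Mekler's theorem and your forward direction, but not conversely. The statement as printed should be read with this support condition in place of the condition on translates of $\Delta$.

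Even with that hypothesis, your device of putting every $\mathrm{Fix}(h)$ into the clopen algebra $\mathcal{A}_0$ is unavailable. The map $x\mapsto 2x$ is a homeomorphism of $\mathbb{Q}$ whose only fixed point is $0$, so fixed-point sets must be closed but need not be open. The coincidence problem has to be controlled through supports instead. The points $z$ at which $zg_1^{-1},\ldots,zg_k^{-1}$ are pairwise distinct form exactly $\bigcap_{i<j}\mathrm{supp}(g_i^{-1}g_j)$, an intersection of supports. That is the natural place for the support hypothesis to enter your genericity argument.
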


This shows that any countable group which acts primitively on a non-trivial
countable topological space has the same action on the rational world. 
Indeed, primitivity is not required as long as Mekler's condition is
satified. So the existence of a transitive action of the infinite cyclic group
on the rational world~\cite{rational} follows from its action on the countable
Urysohn space~\cite{cv}. (The referee points out that this can be seen more
easily: the theorem of Sierpi\'nski~\cite{s:rat} shows that the rational
Urysohn space is homeomorphic to the rational world, even though the metrics
are quite different.

\section{Neighbourhood filters}

We have seen a connection between filters and topologies on a countable set.
Filters are also connected with graphs. 

Let $\Gamma$ be a graph on a countable vertex set $\Omega$. We define the
\emph{neighbourhood filter} of $\Gamma$ to be the filter $\mathcal{F}_\Gamma$
generated by the neighbourhoods $\Gamma(v)=\{w\in\Omega: v\sim w\}$, where 
$\sim$ denotes adjacency in the graph. (The filter generated by a collection
$\mathcal{C}$ of sets consists of all sets containing a finite intersection
of members of $\mathcal{C}$.)

This definition uses open neighbourhoods $\Gamma(v)$. Under weak conditions
on $\Gamma$, the closed neighbourhoods $\bar\Gamma(v)=\{v\}\cup\Gamma(v)$
generate the same filter:

\begin{prop}
Suppose that $\Gamma$ contains no dominant vertex (that is, every vertex has
a non-neighbour). Then the closed neighbourhoods in $\Gamma$ generate the
filter $\mathcal{F}_\Gamma$.
\end{prop}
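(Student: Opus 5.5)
Let $\mathcal{F}'$ denote the filter generated by the closed neighbourhoods $\bar\Gamma(v)$. I will show $\mathcal{F}'=\mathcal{F}_\Gamma$ by proving that each generator of either filter belongs to the other; since both are filters, this gives equality.

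The inclusion $\mathcal{F}_\Gamma\subseteq\mathcal{F}'$ is immediate. For every $v$ we have $\Gamma(v)\subseteq\bar\Gamma(v)$, so $\bar\Gamma(v)$ lies above a generator of $\mathcal{F}_\Gamma$. Hence $\bar\Gamma(v)\in\mathcal{F}_\Gamma$, and therefore $\mathcal{F}'\subseteq\mathcal{F}_\Gamma$. I take this to be the trivial direction.

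The real content is the converse: each open neighbourhood $\Gamma(v)$ must be shown to lie in $\mathcal{F}'$. The difficulty is that $\bar\Gamma(v)$ contains the extra point $v$, and we must cut it away using a finite intersection of closed neighbourhoods. This is where the hypothesis of no dominant vertex enters. Since $v$ is not dominant, there is a vertex $w\ne v$ with $w\not\sim v$. Then $v\notin\bar\Gamma(w)$, because $v\neq w$ and $v$ is not adjacent to $w$. So
\[
\bar\Gamma(v)\cap\bar\Gamma(w)\subseteq\bar\Gamma(v)\setminus\{v\}=\Gamma(v),
\]
using that there are no loops, so $v\notin\Gamma(v)$. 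Thus $\Gamma(v)$ contains a finite intersection of closed neighbourhoods, and so $\Gamma(v)\in\mathcal{F}'$. It follows that $\mathcal{F}_\Gamma\subseteq\mathcal{F}'$.

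The main step, choosing a single non-neighbour $w$ to remove $v$, is short. The only point to check is that graphs here are simple, with no loops, which holds for the graphs under consideration.
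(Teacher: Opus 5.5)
Your proof is correct and is essentially the paper's argument: $\Gamma(v)\subseteq\bar\Gamma(v)$ gives one inclusion, and for a non-neighbour $w\neq v$ the intersection $\bar\Gamma(v)\cap\bar\Gamma(w)\subseteq\Gamma(v)$ gives the other. One cosmetic slip: the opening sentence of your second paragraph announces $\mathcal{F}_\Gamma\subseteq\mathcal{F}'$, whereas what that paragraph actually proves, correctly, is $\mathcal{F}'\subseteq\mathcal{F}_\Gamma$.
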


For $\Gamma(v)\subseteq \bar\Gamma(v)$, and if $w\not\sim v$ then
$\bar\Gamma(v)\cap\bar\Gamma(w)\subseteq\Gamma(v)$.

Of course, if $\Gamma$ contains two vertices with no common neighbour, then
$\mathcal{F}_\Gamma$ contains the empty set, and so is trivial. However, there
is a nice characterisation of graphs whose neighbourhood filter is nontrivial.
Let $R$ be the Rado graph or (Erd\H{o}s--R\'enyi) random graph~\cite{randgr}.

\begin{theorem}
The following three conditions on a countable graph $\Gamma$ are equivalent:
\begin{enumerate}
\item $\mathcal{F}_\Gamma$ is nontrivial;
\item $\Gamma$ contains $R$ as a spanning subgraph;
\item $\mathcal{F}_\Gamma\subseteq\mathcal{F}_R$.
\end{enumerate}
\end{theorem}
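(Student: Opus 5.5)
We prove (a)$\Rightarrow$(b)$\Rightarrow$(c)$\Rightarrow$(a), using throughout the extension-property characterisation of $R$: a countable graph is isomorphic to $R$ if and only if, for any disjoint finite sets $U,V$ of vertices, there is a vertex outside $U\cup V$ adjacent to every vertex of $U$ and to no vertex of $V$. We also use the fact that this property forces every vertex to have a non-neighbour, and every finite intersection of neighbourhoods to be infinite.

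\textbf{(c)$\Rightarrow$(a).} By the remark above, $\emptyset\notin\mathcal{F}_R$. Hence any subfilter of $\mathcal{F}_R$ is nontrivial.

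\textbf{(b)$\Rightarrow$(c).} If $R$ is a spanning subgraph of $\Gamma$, then $R(v)\subseteq\Gamma(v)$ for every $v$. So each generator of $\mathcal{F}_\Gamma$ contains a generator of $\mathcal{F}_R$, and every set in $\mathcal{F}_\Gamma$ contains a finite intersection of sets $R(v)$. Strictly, (c) should be read as comparing filters on a common vertex set, via the identification of $\Omega$ with the vertex set of $R$ given in (b). With that reading the inclusion is immediate.

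\textbf{(a)$\Rightarrow$(b).} This is the main step. Assume every finite set of vertices has a common neighbour in $\Gamma$. First, every finite intersection $\Gamma(v_1)\cap\cdots\cap\Gamma(v_n)$ is infinite, provided $\Omega$ is infinite. Indeed, if it were a finite set $F$, then $F\cup\{v_1,\dots,v_n\}$ would still have a common neighbour $w$. This $w$ lies in the intersection, so $w\in F$, and then $w\sim w$, which is impossible.

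We then build a spanning subgraph $S\subseteq\Gamma$ satisfying the extension property, by a back-and-forth-free bookkeeping construction. Enumerate all pairs $(U,V)$ of disjoint finite subsets of $\Omega$, listing each pair infinitely often or simply once. At stage $k$, a finite set $E_k$ of edges of $\Gamma$ has been committed to $S$, and a finite set $N_k$ of non-edges has been committed. Handle the $k$-th pair $(U,V)$ as follows.
\begin{enumerate}
\item Choose a vertex $z\notin U\cup V$ that is a common $\Gamma$-neighbour of $U$ and has not yet been mentioned in $E_k\cup N_k$. Such a $z$ exists because $\bigcap_{u\in U}\Gamma(u)$ is infinite; if $U=\emptyset$, take any unused vertex.
\item Put the edges $zu$ ($u\in U$) into $S$.
\item Declare the pairs $zv$ ($v\in V$) to be non-edges of $S$.
\end{enumerate}
Since $z$ is fresh, these declarations never conflict with earlier ones. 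At the end, let $S$ consist of the edges put in at some stage, so every other pair is a non-edge of $S$.

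Each pair $(U,V)$ is witnessed in $S$, so $S\cong R$. Moreover $S\subseteq\Gamma$ with the same vertex set $\Omega$, which gives (b). The obstacle is exactly the freshness requirement on $z$, and it is resolved by the infiniteness of finite intersections of neighbourhoods established above.

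The degenerate case of finite $\Omega$ must be excluded: a finite graph cannot have a nontrivial neighbourhood filter, since the argument above gives a contradiction. So we assume, as the theorem intends, that $\Gamma$ is countably infinite.
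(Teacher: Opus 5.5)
Your proof is correct, but your route to (a)$\Rightarrow$(b) differs from the paper's. The paper gets (a)$\Leftrightarrow$(b) by adapting the back-and-forth argument to work directly between $R$ and $\Gamma$ (``going forth only''). That is, it builds a map from the vertices of $R$ into $\Omega$ that sends edges to edges but need not reflect non-edges. You never touch $R$ during the construction. Instead you thin $\Gamma$ to a spanning subgraph $S$ on $\Omega$ with the extension property, choosing a fresh witness for each pair $(U,V)$, and then let the uniqueness theorem for $R$ supply $S\cong R$.

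Your version gets ``spanning'' for free, since $S$ lives on $\Omega$ from the start. In the paper's version one must still make the map onto $\Omega$, for example by back steps that pick a vertex of $R$ with no neighbours among those already mapped. In exchange, the paper's argument is self-contained rather than invoking the uniqueness of $R$ as a black box.

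You also prove explicitly, via the neat no-loops trick, that every finite intersection of neighbourhoods is infinite. Both approaches need this to keep choices fresh (injective), and the paper leaves it tacit. Your cycle (a)$\Rightarrow$(b)$\Rightarrow$(c)$\Rightarrow$(a) is slightly more economical than the paper's (a)$\Leftrightarrow$(b) plus (b)$\Rightarrow$(c)$\Rightarrow$(a). The content of (b)$\Rightarrow$(c) and (c)$\Rightarrow$(a) is the same as the paper's.

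One small tidy-up: the proviso ``provided $\Omega$ is infinite'' is unnecessary. Your no-loops argument, applied to a single vertex $v_1$, shows that (a) forces $\Gamma(v_1)$, and hence $\Omega$, to be infinite. Since (b) and (c) cannot hold on a finite set either, no case needs to be excluded by assumption.
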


\begin{proof}
The filter $\mathcal{F}_\Gamma$ is non-trivial if and only if any finitely
many neighbourhoods in $\Gamma$ have non-empty intersection. Now we can
define an embedding of $R$ into $\Gamma$ by modifying the standard
back-and-forth proof by going forth only. Thus (a) and (b) are equivalent.

If $\Gamma$ contains $R$ as a spanning subgraph, then $R(v)\subseteq\Gamma(v)$,
so (c) holds. Conversely, the proof that (a) and (b) are equivalent and 
standard properties of $R$ show that $\mathcal{F}_R$ is non-trivial.
\end{proof}

Thus, in some sense, $\mathcal{F}_R$ is the ``maximal'' neighbourhood filter.
But some care is required, since we can have two filters isomorphic to
$\mathcal{F}_R$, one properly containing the other.

To see this, let $T$ be the result of a random $3$-colouring of the edges of
the countable complete graph with colours red, green and blue (say). Let $R_1$
be the red subgraph and $R_2$ the red-green subgraph. Then, for every vertex
$v$, we have $R_1(v)\subseteq R_2(v)$;
so $\mathcal{F}_{R_2}\subseteq\mathcal{F}_{R_1}$. To show that the inequality
is strict, suppose for a contradiction that $R_1(v)$ belings to
$\mathcal{F}_{R_2}$. Then there are vertices $w_1,\ldots,w_n$ such that
\[R_2(w_1)\cap\cdots\cap R_2(w_n)\subseteq R_1(v).\]
But this is impossible, since the green graph is isomorphic to $R$, and so
there is a vertex $x$ joined to all of $v,w_1,\ldots,w_r$ by green edges;
thus $x$ belongs to the left-hand intersection above but not to $R_1(v)$.

One can extend this argument to produce countable chains of filters isomorphic
to $\mathcal{F}_R$.

\paragraph{Acknowledgement} I am grateful to the referee for helpful comments.

\end{document}